\newtheorem{thm}{Theorem}
\newtheorem{lemma}{Lemma}[section]
\newtheorem{defn}{Definition}[section]
\newtheorem{prop}{Proposition}[section]
\theoremstyle{definition}
\newcommand{\ud}{\,\mathrm{d}}
\newcommand{\La}{\Lambda}
\newcommand{\om}{\omega}
\newcommand{\Om}{\Omega}
\newcommand{\eps}{\epsilon}
\newcommand{\Del}{\mathfrak{M}}
\author{Thomas Beck \and Philippe Sosoe \and Percy Wong}
\address{Department of Mathematics, Princeton University\\ Fine Hall, Washington Road, Princeton NJ.}
\email{tdbeck@math.princeton.edu}
\email{psosoe@math.princeton.edu}
\email{pakwong@math.princeton.edu}
\title[Duchon-Robert solutions]{Duchon-Robert solutions for the Rayleigh-Taylor and Muskat problems}
\date{}
\begin{document}
\maketitle
\begin{abstract}
We construct analytic solutions to the Euler equations with an interface between two fluids, extending work of Duchon and Robert. We also show that the estimates of Duchon and Robert yield global analytic solutions to the Muskat problem with small initial data.
\end{abstract}
\section{Introduction}
We consider the interface problem for two perfect incompressible fluids in $\mathbb{R}^2$ under the influence of gravity. Each of the two fluids occupies one of the two connected regions $\Omega_\pm(t)$ in the complement of a moving interface $\Sigma(t)$ parametrized by a curve $y(x_1,t)$: 
\[\Sigma(t) = \{(x_1,x_2): x_2 = y(x_1,t)\}.\]
$\Omega_+(t)$ is the region above the curve $\Sigma(t)$
\[\Omega_+(t)=\{(x_1,x_2):x_2>y(x_1,t)\},\]
while $\Omega_-(t)=\{(x_1,x_2):x_2<y(x_1,t)\}$ is the region below.
The equations of motion are given by the two-dimensional Euler system:
\begin{equation}\label{VS}
 \Bigg\{\begin{array}{rl}
    \rho_{\pm}v_t + \nabla\cdot (\rho_\pm v\otimes v) + \nabla p &= \rho_\pm g \mathbf{e}_y \quad  \text{in }
\Omega_\pm \\
    \textrm{div }v &= 0 \quad  \text{in } \Omega_\pm.
   \end{array}
\end{equation}
Here, $v=(v_1,v_2)$ is the velocity of the fluid, $\rho_+$ and  $\rho_-$ are the constant densities of the two fluids in the regions $\Omega_+$ and $\Omega_-$, respectively. The constant $g<0$ is the gravitational acceleration, and $\mathbf{e}_y=(0,1)$. It is useful to introduce the Atwood number $a$, defined as 
\[a = \frac{\rho_+-\rho_-}{\rho_++\rho_-}.\]  

We are interested in \emph{vortex sheet} solutions to the system (\ref{VS}). These are solutions such that, for all times $t>0$, the vorticity 
\[\Omega = \operatorname{curl}v\]
is a measure supported on the interface curve $\Sigma(t)$. In \cite{DR}, the authors construct global solutions for small initial data to the vortex sheet problem in a homogeneous fluid. Homogeneity corresponds to the case $\rho_+ = \rho_-$, $a=0$. In the present work, we show that their method extends to the case $a<0$ when the fluid with higher mass density lies above the lower-density fluid. When $a>0$ (``light fluid on top''), we obtain local-in-time solutions. Even when they are only known to exist for short time, these special solutions are of interest because they provide examples of vortex sheets for which, at the initial time, the interface has limited regularity ($x\mapsto y(x,0)$ is not in $C^{1+\alpha}$ for any $\alpha>0$), but is analytic for any positive time. Note that we are able to construct global solutions in the physically ``unstable'' case, while in the physically ``stable'' case, we only obtain local solutions. The Rayleigh-Taylor problem is known to be ill-posed for both $a>0$ and $a<0$; the physical relevance of the solutions we find is unclear.

Since the paper of Duchon and Robert, several works on the ill-posedness of the Kelvin-Helmholtz ($a=0$) and Rayleigh-Taylor ($a\neq0$) problems have appeared. We mention a selection of them here. In \cite{KL}, Kamotski and Lebeau prove that for the Rayleigh-Taylor problem, if the vorticity density and interface have $C^{1+\alpha}$ regularity locally in space and time for some $\alpha>0$, then in fact they must be $C^\infty$ smooth. In \cite{W}, Wu considers the Kelvin-Helmholtz problem and shows that if the vorticity density is both bounded and bounded away from zero, and if the interface satisfies the chord-arc condition, then the vorticity density and interface must be analytic. For an excellent survey on problems related to interfaces in two-dimensional fluids, we refer the reader to the survey paper, \cite{BL}, by Bardos and Lannes.

In the final section of this paper, we show that a straightforward application of the method of Duchon and Robert yields global analytic solutions to another interface dynamics problem, the Muskat equation. This models the evolution of the interface between two fluids of different densities in a porous medium. See \cite{CCGS} and references there for more information. The two fluids of density $\rho_+$ and $\rho_-$ occupy the regions $\Omega_+(t)$ and $\Omega_-(t)$, which are separated by an interface
\[\Sigma(t)=\{(x,f(x,t)): x\in \mathbb{R}\},\]
represented as the graph of a function $f$. This function satisfies (see \cite{CCGS}, equation (1))
\begin{align}
\partial_t f(x,t)& = \frac{\rho_--\rho_+}{2\pi}\mathrm{p.v.}\int \frac{\partial_x f(x,t)-\partial_x f(x',t)}{(x-x')^2+(f(x,t)-f(x',t))^2} (x-x')\,\mathrm{d}x'\label{eq: muskat}\\
f(x,0)&=f_0(x), \ x\in \mathbb{R}.\nonumber
\end{align}
$\Omega_+(t)$ is again assumed to lie above the interface, and we work in the ``stable'' regime $\rho_->\rho_+$. We assume that derivative of the initial data lies in the Wiener algebra $B_0$ (see Definition \ref{def: spaces}), with small norm. Several authors have constructed solutions in the neighborhood of a stationary interface, see \cite{CCGS}, \cite{CG}, \cite{CP}, \cite{SCH}. Although it is a very simple application of the work \cite{DR}, the method used here produces global solutions, analytic for positive time with what seems to be the least regular initial data in the literature.

\section{Equations for vortex sheets}
We begin by presenting the equations for vortex sheets. The interface $\Sigma(t)$ separating the two fluid regions is assumed to be the graph of a function on $\mathbb{R}$:
\[\Sigma(t)=\{(x,y(x,t)):x\in \mathbb{R}\}, \quad t\ge 0.\]
The equations are formulated in terms of the vorticity density $\tilde{\omega}$, defined by:
\begin{displaymath}
\langle \Omega,\phi\rangle = \int_{\mathbb{R}}\tilde{\omega}(x,t) \phi(x,y(x,t)) \,\mathrm{d}x
\end{displaymath}
for all $\phi\in C_c^\infty(\mathbb{R}^2)$. 
 The components of the velocity field can be expressed in terms of $\tilde{\omega}$ by the Biot-Savart law:
\begin{equation}\label{VS:BR}
\left\{ \begin{aligned} v_1(x,t) &= -\frac{1}{2\pi}\textrm{p.v.}\int\frac{y(x,t)-y(x^\prime,t)}{(x-x^\prime)^2+(y(x,t)-y(x^\prime,t))^2}\tilde{\omega}(x^\prime,t)\ud x^\prime \\
   v_2(x,t) &= \frac{1}{2\pi}\textrm{p.v.}\int\frac{x-x^\prime}{(x-x^\prime)^2+(y(x,t)-y(x^\prime,t))^2}\tilde{\omega}(x^\prime,t)\ud x^\prime.
\end{aligned}\right.
\end{equation}

Sulem, Sulem, Bardos and Frisch \cite{SSBF} first derived the equations of motion of a vortex sheet for the Kelvin-Helmholtz problem (see also Sulem, Sulem \cite{SS} for the Rayleigh-Taylor problem):
\begin{equation}\label{VS:Interface}
 \left\{\begin{aligned}
    \partial_{t}y + v_1 y_x &= v_2 \\
    \partial_t(\tfrac{1}{2}\tilde{\omega} - a(v_1+v_2y_x)) + \big(v_1(\tfrac{1}{2}\tilde{\omega} - a(v_1+v_2y_x))\big)_x&\\ \quad - a\big(\big(\frac{\tilde{\omega}^2}{8(1+ y_x^2)}-\frac{|v|^2}{2}-gy\big)\big)_x  &= 0. \\
\end{aligned}\right.
\end{equation}
Throughout, the subscripts $x$ and $t$ denote partial differentiation in the variables $x$ and $t$, respectively.

Following \cite{DR}, we write the system (\ref{VS:BR}), (\ref{VS:Interface}) in terms of its linearization around the stationary solution $(y,\tilde{\omega})=(0,2)$. Introducing the function $\omega$, defined by
\[\tilde{\omega}=2(1+\omega),\]
and differentiating the first equation with respect to $x$, we rewrite (\ref{VS:BR}), (\ref{VS:Interface}) as
\begin{equation}\label{LinNh}
\left\{\begin{aligned}
    \partial_t y_x - \Lambda \omega &= N_1 \\
    \partial_t \omega + aH\partial_t y_x - \Lambda y_x - a\partial_x \omega + agy_x & = N_2 \\
    y_x(t=0) & = y_{x0}.
   \end{aligned}\right.
\end{equation}
$N_1=N_1(y_x,\omega)$ and $N_2=N_2(y_x,\omega)$ are non-linear terms whose exact form will be given in Section 4 (see \eqref{yeqn}, \eqref{weqn} below).
 $\Lambda$ is the pseudo-differential operator with symbol $|\xi|$, and $H$ is the Hilbert transform, with symbol $-i\xi/|\xi|$. For the Fourier transform, we use the definiton
\[\widehat{u}(\xi) =\int e^{-i\xi x} u(x)\,\mathrm{d}x.\]

The equations \eqref{VS:BR} and \eqref{LinNh} form the vortex sheet system with which we shall be concerned.

Before stating our result, let us recall the definition of the function spaces introduced in \cite{DR}. 
\begin{defn} \label{def: spaces} We denote by $B_0$ the space of functions whose Fourier transforms are bounded measures, with the norm
\[\|u\|_{B_0}=\int_\mathbb{R} \mathrm{d}|\widehat u|.\]
For $\rho\ge 0$, the space $B_\rho\subset B_0$ is defined by the norm:
\[\|u\|_{B_\rho}=\int_\mathbb{R} \mathrm{d}|e^{\rho|\xi|}\widehat{u}(\xi)|.\]

For $\alpha \ge 0$, $\mathcal{B}_\alpha$ is the subspace of $C_t^0(\mathbb{R}_{\ge 0};B_0)$ defined by the norm
\[\|u\|_{\mathcal{B}_\alpha}=\inf \{\|\mu\|_{B_0}:\mu \text{ a positive bounded measure}, |e^{\alpha t|\xi|}\widehat{u}(\xi,t)|\le \mu  \text{ for all } t\ge 0\}.\]
The measure achieving the infimum is denoted $|u|_\alpha$.
\end{defn}
The Paley-Wiener theorem ensures that functions in $B_\rho$ extend analytically to the strip $\{|\Im z| <\rho\}$ of the complex plane. $B_\rho$ is an algebra under pointwise multiplication:
\begin{equation}\label{eq: algebra}
\|uv\|_{B_\rho}\le \|u\|_{B_\rho}\|v\|_{B_\rho}.
\end{equation}

%For the local in time result in the case $ag>0$, we will use a variant of the spaces $\mathcal{B}_\alpha$:
%\begin{defn}
%For $\alpha \ge 0$, $\tilde{\mathcal{B}}_\alpha$ is the subspace of $C_t(\mathbb{R}_{\ge 0}; B_0)$ defined by the norm
%\[\|u\|_{\tilde{\mathcal{B}}_\alpha}=\inf \{\|u\|_{B_0}:\mu \text{ a positive bounded measure}, f(\xi,t)|\widehat{u}(\xi,t)|\le \mu  \text{ for all } t\ge 0\},\]
%where $f(\xi,t) = e^{\alpha t|\xi|}$ for $t\le 1$ and $f(\xi,t)=e^{\alpha|\xi|}$ if $t\ge 1$. The measure achieving the infimum is denoted $|u|_{\tilde{\mathcal{B}}_\alpha}$.
%\end{defn}

The main result of the present work is the following:
\begin{thm}\label{mainthm} Consider $0\leq |a| < 1$. There are $\eps_1 > 0$ and $\alpha>0$ such that for any $y_{0x} \in B_0$ with mean zero and norm no greater than $\eps_1$, there exists a solution $(y_x,\om)$ in $\mathcal{B}_\alpha \times \mathcal{B}_\alpha$ to the system (\ref{LinNh}). The functions $y_x(x,t)$, $\om(x,t)$ converge to zero uniformly as $t$ goes to infinity.

If $a<0$, then for any fixed time $T>0$, there are $\epsilon_2(T)>0$ and $\alpha>0$ such that for any  $y_{0x}\in B_0$ with mean zero and norm no greater than $\epsilon_2$, there exists a pair $(y_x,\om)$ in $\mathcal{B}_\alpha \times \mathcal{B}_\alpha$ which solves the system (\ref{LinNh}) for $0\le t\le T$.
\end{thm}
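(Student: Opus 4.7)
The plan is to follow the Duchon-Robert strategy: recast \eqref{LinNh} as an integral equation whose fixed points are the desired solutions, and solve that equation by a Banach contraction in a small ball of $\mathcal{B}_\alpha \times \mathcal{B}_\alpha$. The analyticity rate $\alpha$ will come from the spectrum of the linearization, and the smallness of $y_{x0}$ in $B_0$ is what closes the contraction.

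The first concrete step is to diagonalize the linear part of \eqref{LinNh} in Fourier. Writing $U=(y_x,\om)^{T}$ and eliminating $\pa_t y_x$ from the second equation using the first, the linearization has symbol
\[M(\xi) = \begin{pmatrix} 0 & |\xi| \\ |\xi|-ag & 2ai\xi \end{pmatrix},\]
with eigenvalues $\lambda_\pm(\xi)=ai\xi\pm\sqrt{(1-a^2)|\xi|^2-ag|\xi|}$. For $|a|<1$ and $|\xi|$ large, $\Re\lambda_\pm(\xi)=\pm\sqrt{1-a^2}\,|\xi|+O(1)$, so on the stable eigenspace the linear semigroup decays like $e^{-\alpha t|\xi|}$ for any $\alpha<\sqrt{1-a^2}$, while on the unstable eigenspace it grows at the same rate. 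The associated spectral projectors $\Pi_\pm(\xi)$ are uniformly bounded in $\xi$ and so act continuously on $B_0$.

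The Cauchy problem is then split by $\Pi_\pm$ and written as a Duhamel formula with opposite orientations on the two subspaces. On the stable side, propagate forward from $t=0$,
\[\Pi_- U(t) = e^{t\lambda_-}\Pi_-U(0) + \int_0^t e^{(t-s)\lambda_-}\Pi_- N(U(s))\,\ud s,\]
with $\Pi_-U(0)$ fixed by $y_{x0}$. On the unstable side, propagate backward from $t=\infty$ (or from $t=T$ for the local statement),
\[\Pi_+ U(t) = -\int_t^\infty e^{(t-s)\lambda_+}\Pi_+ N(U(s))\,\ud s,\]
which converges in $\mathcal{B}_\alpha$ because $N(U)$ is $O(\|U\|^2)$ and the $\mathcal{B}_\alpha$-decay of $U$ beats the semigroup growth; this equation implicitly prescribes $\om(\cdot,0)$. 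Summing the two defines a nonlinear map $\mathcal{T}$ whose fixed point is the sought solution. The nonlinearities $N_1,N_2$ (to be exhibited explicitly in Section 4) are analytic series in $(y_x,\om)$ whose summands are pointwise products of $y_x,\om$, their derivatives, and Hilbert transforms of such quantities, coming from the Biot-Savart integrals in \eqref{VS:BR}. Combining the algebra estimate \eqref{eq: algebra} with the identity $\|Hu\|_{B_\rho}=\|u\|_{B_\rho}$ reduces the control of $N$ in $\mathcal{B}_\alpha$ to a convergent power series in $\|U\|_{\mathcal{B}_\alpha}$, and since $N$ vanishes to second order, $\mathcal{T}$ contracts a ball of radius $\sim\|y_{x0}\|_{B_0}$. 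This produces the global solution, and the estimate $|\widehat U(\xi,t)|\le \mu(\xi)\,e^{-\alpha t|\xi|}$ built into the norm yields uniform convergence to zero as $t\to\infty$. The local statement follows by the same scheme with the backward integral truncated at $T$, a cutoff which forces $\eps_2$ to depend on $T$ through the factor $e^{\Re\lambda_+\cdot T}$.

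The main obstacle is the unstable mode $\Pi_+$: the backward integral is well-defined only if the $|\xi|$-decay of $N(U(s))$ beats the growth $e^{(t-s)\Re\lambda_+}$ as $s\to\infty$, which requires careful accounting of each $\La$, $\pa_x$ and Hilbert transform hidden in $N_1,N_2$. In practice this forces $\alpha$ to be chosen strictly below $\sqrt{1-a^2}$ and to satisfy an explicit inequality involving the algebra constant in \eqref{eq: algebra}; once such an $\alpha$ is fixed, the contraction closes and the scheme above yields the solutions claimed in the theorem.
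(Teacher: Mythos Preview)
Your overall architecture---diagonalize the linearization in Fourier, run the Duhamel formula forward on the stable eigenspace and backward from infinity on the unstable one, and close by contraction in $\mathcal{B}_\alpha$---is exactly what the paper does. The gap is in your treatment of the nonlinearity. You assert that $N_1,N_2$ are ``analytic series in $(y_x,\om)$ whose summands are pointwise products of $y_x,\om$, their derivatives, and Hilbert transforms of such quantities,'' and that the algebra estimate \eqref{eq: algebra} together with $\|Hu\|_{B_\rho}=\|u\|_{B_\rho}$ is enough to control them. That is not the structure of the Biot--Savart integrals. Expanding the kernel $\big((x-x')^2+(y(x)-y(x'))^2\big)^{-1}$ produces the operators
\[
T_j(y_x)u(x)=\frac{1}{\pi}\,\mathrm{p.v.}\int\Big(\frac{y(x)-y(x')}{x-x'}\Big)^{j}\frac{u(x')}{x-x'}\,\ud x',
\]
which for $j\ge 2$ are genuinely bilinear singular integrals in the difference quotient $p(x,x')$ and are not expressible as compositions of pointwise multiplication and the Hilbert transform. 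The whole point of the Duchon--Robert machinery (the paper's Proposition~\ref{eq: drprop} and the bounds \eqref{Tjbound1}--\eqref{Tjbound2}) is to show that nonetheless $|e^{\rho r}\mathcal{F}(T_j(y_x)\Omega)|$ is controlled by an iterated convolution $|e^{\rho r}\widehat{y_x}|^{*j}*|e^{\rho r}\widehat{\Omega}|$; this is what replaces the naive algebra estimate, and it is the step your sketch is missing. Relatedly, the nonlinearities carry no interior derivatives: the paper's structural observation is that $N_1=F_x$, $N_2=(G_1)_t+(G_2)_x$ with $F,G_1,G_2\in\mathcal{B}_\alpha$, and Lemma~\ref{lem: linearestimates} shows $I^\pm$ absorbs exactly one $\partial_x$ or $\partial_t$. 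If the summands really contained derivatives of $y_x,\om$ as you say, the scheme would not close.

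There is a second gap in the $a<0$ case. Your claim that the spectral projectors $\Pi_\pm(\xi)$ are uniformly bounded in $\xi$ is false there: the eigenvalues $\lambda_\pm(\xi)$ coalesce at $|\xi|=ag/(1-a^2)$ (where $m(\xi)=0$), the matrix $\widehat{A}(\xi)$ has a nontrivial Jordan block, and $\widehat{B}^{-1}$ blows up. The paper does not diagonalize at low frequencies; instead it uses the crude bound $\|e^{t\widehat{A}(\xi)}\|\le e^{Ct}$ on $|\xi|\le 2ag/(1-a^2)$ and reserves the diagonalization for high frequencies with time-truncated integrals $\tilde I^\pm$. Simply ``truncating the backward integral at $T$'' does not address the degeneracy of the projectors, and your stated reason for the $T$-dependence of $\eps_2$ (the factor $e^{\Re\lambda_+\cdot T}$) is not the actual mechanism.
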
 

As previously mentioned, the case $a=0$ in the theorem was done in \cite{DR}. In contrast to Duchon and Robert for $a=0$, we make no assertion regarding the uniqueness of the solutions provided by Theorem \ref{mainthm}. This is because, although the expressions \eqref{eq: mainsystem} for the solutions $(y_x,\omega)$ are unique up to adding a constant to $\omega$, $\omega$ is no longer characterized by having zero mean, as it was in \cite{DR}.

We begin the proof of the theorem in the case $a>0$ in the next section, where we further reduce the system (\ref{LinNh}) to a form suitable to the application of a contraction mapping argument in $\mathcal{B}_\alpha$. In Sections \ref{sec: structure} and \ref{sec: nonlinear}, we explain how to adapt the estimates in \cite{DR} to the non-linear terms appearing in the equations for $a\neq 0$. In Section \ref{sec: agpos} we show how to address the case $a<0$, and establish the second part of the theorem. Finally, in Section \ref{sec: muskat}, we write the Muskat equation in a form where the estimates of Duchon and Robert can be applied.

\section{The linear system}

We initially treat \eqref{LinNh} as an inhomogeneous linear system. That is, $N_1$ and $N_2$ are assumed to be given functions in $\mathcal{B}_\alpha$, $\alpha > 0$, and the system is solved by diagonalization. First, consider the case $N_1= N_2 =0$. We let $V=(y_x,\omega)$. Substituting the first equation in \eqref{LinNh} into the second, we find the system:
\begin{equation}\label{eq: relation}
\partial_t V(t) = AV(t),
\end{equation}
where the operator matrix $A$ is given by
\begin{equation}
\left(\begin{array}{cc}
0 &\Lambda\\
\Lambda-ag &2a\partial_x
\end{array}\right).
\end{equation}
Taking Fourier transforms, we obtain the matrix of symbols 
\begin{equation}
\widehat{A}(\xi)= \left(\begin{array}{cc}
0 &|\xi|\\
|\xi|-ag &2 ai\xi
\end{array}\right).
\end{equation}
The eigenvalues are
\begin{equation}\label{eq: evalues}
\lambda_\pm(\xi) = ai\xi\pm |\xi| \sqrt{1-a^2-ag|\xi|^{-1}}.
\end{equation}
This leads us to introduce the notation
\[\Del =  \sqrt{1-a^2-ag\Lambda^{-1}}.\]
Let $m(\xi)$ be the multiplier corresponding to $\Del$:
\[\widehat{\Del f}(\xi)=m(\xi)\widehat{f}(\xi).\]
When $1-a^2-ag|\xi|^{-1}$ is negative, as can occur if $a<0$, the square root is defined as
\[m(\xi)= i\sqrt{|1-a^2-ag|\xi|^{-1}|}.\]
In case the $a<0$, $m(\xi)=0$ when
$$ |\xi|= \frac{ag}{1-a^2}. $$
In the case $a>0$ which we consider in this section, the multiplier $m(\xi)$ is bounded away from zero. $\Del^{-1}$ is then well-defined and bounded on $B_0$. 

The eigenvectors corresponding to \eqref{eq: evalues} are
\begin{equation}
r_\pm(\xi) =\left(1,\frac{1}{|\xi|}(ia\xi \pm |\xi|m(\xi))\right).
\end{equation}
We can now diagonalize \eqref{eq: relation}. Let
\begin{equation}\label{eq: BV}
U=\left(\begin{array}{c} u_+\\ u_- \end{array}\right) = BV,
\end{equation}
\begin{equation}
B = \left(\begin{array}{cc}
-\Del - aH & -1\\
-\Del+ aH &  1
\end{array}\right).
\end{equation}
The variables $U=(u_+,u_-)^t$ solve the system
\begin{equation} \label{eq: diagsystem}
\partial_t \widehat{U}(\xi,t) = \left(\begin{array}{cc} \lambda_+(\xi) & 0\\
 0 &\lambda_-(\xi)\end{array}\right)\widehat{U}(\xi,t).
\end{equation}
Writing
\begin{equation}\label{eq: Ssemigroup}
S_\pm(t) = e^{t(ia\partial_x\pm \Lambda \mathfrak{M})},
\end{equation}
the solution to \eqref{eq: diagsystem} is
\[U(t) = (S_+(t)u_+(0), S_-(t)u_-(0)).\]

We now deal with the inhomogeneous case where $N_1$ and $N_2$ in \eqref{LinNh} are functions in $\mathcal{B}_\alpha$. On the Fourier side, the system may be written as
\begin{equation}\label{eq: VnonH}
\partial_t \widehat{V}(t) = \left( \begin{array}{cc}
0 & |\xi|\\
|\xi|-ag & 2ia\xi
\end{array}\right)\widehat{V}(t) + \left(\begin{array}{c} \widehat{N}_1(t) \\ \widehat{N}_2(t) \end{array}\right).
\end{equation}
For the variables $U$ \eqref{eq: BV}, we find
\begin{equation}\label{eq: UnonH}
\partial_t \widehat{U}(t) = \left( \begin{array}{cc}
0 & |\xi|\\
|\xi|-ag & 2ia\xi
\end{array}\right)\widehat{U}(t) + \left(\begin{array}{c} -\big(m(\xi)-ai\frac{\xi}{|\xi|}\big)\widehat{N}_1(t) -\widehat{N}_2(t)\\ -\big(m(\xi)+ai\frac{\xi}{|\xi|}\big)\widehat{N}_1(t)+\widehat{N}_2(t) \end{array}\right).
\end{equation}
Following \cite{DR}, the solutions of \eqref{eq: UnonH} are expressed through the Duhamel formula. To avoid the growth due to the eigenvalue $\lambda_+$, Duchon and Robert use an integral extending from $t$ to infinity (see \eqref{eq: Iplus}, \eqref{eq: usols}), effectively prescribing the behaviour at temporal infinity of the solution. This prescription is already implicit in the choice of the spaces $\mathcal{B}_\alpha$. We remark that the consistency of the assumption that the solutions decay for large time is a special feature of the problem. It is a manifestation of the ellipticity of the Rayleigh-Taylor problem in space and time (see \cite{BL}).

To write down the solutions to \eqref{eq: UnonH}, we define
\begin{equation}\label{eq: Iplus}
I^+h(x,t) = \int_t^\infty S_+(t-s)h(x,s)\,\mathrm{d}s,
\end{equation}
and
\begin{displaymath}
I^-h(x,t) = \int_0^tS_-(t-s)h(x,s)\,\mathrm{d}s,
\end{displaymath}
and finally:
\begin{displaymath}
I_0h = I^+h(0).
\end{displaymath}
Note that the operators $I^{\pm}$ defined here differ from the operators $I^{\pm}$ defined in \cite{DR}.

For $N_1$ and $N_2$ in $\mathcal{B}_\alpha$, the solutions to \eqref{eq: UnonH} are given by
\begin{align}\label{eq: usols}
u_+(t) &= S_+(t) u_+(0) - I^+(N_2+(\Del+aH)N_1)(t) + S_+(t)I_0(N_2+(\Del+aH) N_1),\\
u_-(t) &= S_-(t)u_-(0) + I^-(N_2-(\Del-aH) N_1)(t).\label{eq: usols2}
\end{align}
At this point, initial data has only been prescribed for $y_x$ in \eqref{LinNh}, and not for $\omega$. The remaining degree of freedom is used to ensure that $u_+$ belongs to $\mathcal{B}_0$. Namely, we set
\[u_+(0) = -I_0(N_2+(\Del+aH) N_1).\] 
This choice amounts to the prescription
\begin{equation} \label{eqn:wandy}
\omega(0) = -aHy_{x0}-\Del y_{x0}+I_0(N_2+(\Del+aH)N_1).
\end{equation}
Taking equation (\ref{eqn:wandy}) into account, we express $y_x$ and $\omega$ in terms of $u_+$ and $u_-$. This results in the following representation of the solutions:
\begin{equation} \label{eq: mainsystem}
\left(\begin{array}{c}
y\\
\omega
\end{array}\right)= B^{-1}U(t) = -\frac{1}{2}\Del^{-1}\left(\begin{array}{cc}1 & 1\\ \Del-aH & -\Del-aH \end{array}\right)U(t),
\end{equation}
with $U=(u_+,u_-)$ given by \eqref{eq: usols}, \eqref{eq: usols2}, with 
\begin{align*}
u_+(0) &= -I_0(N_2+(\Del+aH) N_1)\\
u_-(0) &= (-\Del+aH)y_{x0}+\omega(0) = -2\Del y_{x0}+I_0(N_2+(\Del+aH)N_1).
\end{align*}
As previously remarked, when $a>0$, $\Del^{-1}$ is an operator with bounded multiplier. We will apply a fixed point argument to the system \eqref{eq: mainsystem}.

Before discussing the nonlinear terms, let us derive two simple estimates for the operators $I^+$ and $I^-$. We will see in Sections \ref{sec: structure} and \ref{sec: nonlinear} that $N_1$ and $N_2$ have the form
\begin{align} \label{nonlinearterms}
N_1& = F(y_x,\omega)_x,\\
N_2 &= G_1(y_x,\omega)_t+G_2(y_x,\omega)_x,
\end{align}
where $F(y_x,\om)$, $G_1(y_x,\om)$ and $G_2(y_x,\om)$ are in $\mathcal{B}_\alpha$ if $y_x$ and $\om$ are.
This motivates the next lemma.

\begin{lemma}\label{lem: linearestimates} Suppose $a>0$. There exists $\alpha_0>0$ such that for $0<\alpha <\alpha_0$ and any $F\in \mathcal{B}_\alpha$, we have the estimates:
\begin{align}
 |I^\pm (F_x)|_\alpha &\leq C(\alpha)|F|_\alpha, \label{eq: xlinest}\\ 
 |I^\pm (F_t)|_\alpha &\leq C(\alpha)|F|_\alpha. \nonumber
\end{align}
\end{lemma}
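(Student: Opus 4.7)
The plan is to work on the Fourier side, where $I^\pm$ act by the integral kernels $e^{(t-s)\lambda_\pm(\xi)}$ with $\lambda_\pm(\xi)=ai\xi\pm|\xi|m(\xi)$. Since $a>0$ and $g<0$, we have $-ag|\xi|^{-1}\ge 0$, so $m(\xi)$ is real and $m(\xi)\ge\sqrt{1-a^2}>0$ uniformly in $\xi$. I would take $\alpha_0:=\sqrt{1-a^2}$, fix $\alpha\in(0,\alpha_0)$, and use throughout the pointwise control $|\hat F(\xi,s)|\le e^{-\alpha s|\xi|}\,d|F|_\alpha$ coming from the definition of $\mathcal{B}_\alpha$.

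For the $F_x$ estimate, I would write $\widehat{F_x}(\xi,s)=i\xi\hat F(\xi,s)$ and use $\mathrm{Re}\,\lambda_\mp(\xi)=\mp|\xi|m(\xi)$. After multiplying by $e^{\alpha t|\xi|}$, the Fourier transform $|\widehat{I^-(F_x)}(\xi,t)|$ is dominated by
\begin{equation*}
|\xi|\int_0^t e^{-(t-s)|\xi|(m(\xi)-\alpha)}\,ds\cdot d|F|_\alpha\le\frac{d|F|_\alpha}{\sqrt{1-a^2}-\alpha},
\end{equation*}
and the analogous computation on $[t,\infty)$ (where the exponentials in $|\xi|m(\xi)$ and in $\alpha|\xi|$ reinforce each other) bounds $|\widehat{I^+(F_x)}(\xi,t)|$ by $d|F|_\alpha/(\sqrt{1-a^2}+\alpha)$.

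For the $F_t$ estimate, I would integrate by parts in $s$, using $\partial_s e^{(t-s)\lambda_\pm(\xi)}=-\lambda_\pm(\xi)e^{(t-s)\lambda_\pm(\xi)}$, to obtain
\begin{align*}
\widehat{I^-(F_t)}(\xi,t)&=\hat F(\xi,t)-e^{t\lambda_-(\xi)}\hat F(\xi,0)+\lambda_-(\xi)\int_0^t e^{(t-s)\lambda_-(\xi)}\hat F(\xi,s)\,ds,\\
\widehat{I^+(F_t)}(\xi,t)&=-\hat F(\xi,t)+\lambda_+(\xi)\int_t^\infty e^{(t-s)\lambda_+(\xi)}\hat F(\xi,s)\,ds,
\end{align*}
the $s=\infty$ boundary term in the second identity dropping because $e^{(t-s)\lambda_+(\xi)}\hat F(\xi,s)$ decays at rate $|\xi|(m(\xi)-\alpha)>0$. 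All boundary terms are dominated by $d|F|_\alpha$ after multiplication by $e^{\alpha t|\xi|}$ (using $\alpha<m(\xi)$). The remaining integral terms are treated exactly as in the $F_x$ case, except that $i\xi$ is replaced by $\lambda_\pm(\xi)$; the resulting prefactor $|\lambda_\pm(\xi)|/(|\xi|(m(\xi)\mp\alpha))$ is controlled using $|\lambda_\pm(\xi)|\le a|\xi|+|\xi|m(\xi)$ by $(a+m(\xi))/(m(\xi)\mp\alpha)$, which is uniformly bounded in $\xi$ since $m(\xi)\in[\sqrt{1-a^2},\infty)$ (the ratio tends to $1$ as $|\xi|\to 0$ and to $(a+\sqrt{1-a^2})/(\sqrt{1-a^2}\mp\alpha)$ as $|\xi|\to\infty$).

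I expect the $F_t$ estimate to be the main obstacle. First, since $F\in\mathcal{B}_\alpha$ need not be pointwise differentiable in $t$, the integration by parts must be justified either by density from functions smooth in $t$ or by taking the right-hand side of the IBP identity as the definition of $I^\pm(F_t)$. Second, although the ratio $|\lambda_\pm(\xi)|/|\xi|=\sqrt{1-ag|\xi|^{-1}}$ is singular at $\xi=0$ of order $|\xi|^{-1/2}$, the divergence of $m(\xi)$ at the origin compensates it exactly; verifying this cancellation is the quantitative heart of the lemma, and it is also what limits how large $\alpha_0$ can be taken in terms of $a$.
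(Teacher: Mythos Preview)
Your proposal is correct and follows essentially the same route as the paper: a direct Fourier-side estimate using the exponential damping $e^{-(t-s)|\xi|m(\xi)}$ for the $F_x$ case, and integration by parts in $s$ for the $F_t$ case, with the $s=\infty$ boundary term in $I^+$ killed by the $\mathcal{B}_\alpha$ hypothesis. The only cosmetic differences are that the paper takes $\alpha_0=\sqrt{1-a^2}/2$ rather than $\sqrt{1-a^2}$, and it does not pause over the low-frequency cancellation or the distributional justification of the integration by parts that you (reasonably) flag.
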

\begin{proof}
We will prove the estimates for the operator $I^-$, the estimates for $I^+$ follow similarly.
\begin{align*}  
 & e^{\alpha t|\xi|}|\widehat{I^-(F_x)}(\xi,t)|\\  
\le&\   \int_0^t \left|e^{(t-s)(ia\xi+\alpha|\xi|-m(\xi)|\xi|)}\right||\xi| e^{\alpha s|\xi|}|\widehat{F}(\xi,s)| \ud s \\  
 \leq&\ |F|_\alpha\int_0^t e^{(t-s)|\xi|(\alpha-m(\xi))}|\xi| \ud s \\  
=&\   |F|_\alpha\frac{1}{|\alpha-m(\xi)|}(1-e^{t|\xi|(\alpha-m_{a,g})}).
\end{align*}
When $ag < 0$, the last line is uniformly bounded in $t$ if $\alpha< \sqrt{1-a^2}/2$.

To derive the analogous bound for $I^-(F_t)$, we integrate by parts in $s$:
\begin{align*}
 &e^{\alpha t|\xi|}|\widehat{I^-(F_t)}(\xi,t)|\\
=&\ \left|e^{\alpha|\xi|t}\int_0^te^{(t-s)(ia\xi-m(\xi)|\xi|)}\partial_s \widehat{F}(\xi,s)\ud s\right|\\
\le&\  \Big| e^{\alpha|\xi|t}\cdot e^{(t-s)(ia\xi-m(\xi)|\xi|)} \widehat{F}(\xi,s)\Big|^{s=t}_{s=0}\\
&\quad + e^{\alpha |\xi|t}\Big|\int_0^t(ia\xi-m(\xi)|\xi|)e^{(t-s)(ia\xi-m(\xi)|\xi|)}\widehat{F}(\xi,s)\ud s\Big|\\
\le&\  e^{\alpha|\xi|t}|\widehat{F}(\xi,t)|+e^{(\alpha-m(\xi))|\xi|t}|\widehat{F}(\xi,0)|\\
&\quad+ |F|_\alpha \int_0^t|\xi|(|a|+m(\xi))e^{(t-s)(\alpha-m(\xi))|\xi|} \ud s\\
\le&\  C(\alpha)|F|_\alpha,
\end{align*}
where we have again assumed $\alpha < \sqrt{1-a^2}/2$ to pass to the final line. 

To reproduce the estimate above for $I^+(F_t)$, we must integrate by parts in time. The boundary term at $t=\infty$ is zero by the assumption $F\in \mathcal{B}_\alpha$, $\alpha>0$.

\end{proof}
The estimates in the previous lemma also hold if $I^\pm$ is replaced by $S_-(t)I_0$.

\section{The Structure of the Non-Linear Terms}\label{sec: structure}
We consider the structure of the non-linear terms $N_1(y_x,\om)$ and $N_2(y_x,\om)$ appearing in the system (\ref{LinNh}). From the equations (\ref{VS:Interface}) and (\ref{LinNh}), we find:
\begin{equation}\label{yeqn}
N_1(y_x,\om) = (-v_1y_x + (v_2 - H\om))_x
\end{equation}
and
\begin{equation}\label{weqn}
N_2(y_x,\om) = G_1(y_x,\om)_t + G_2(y_x,\om)_x,
\end{equation}
where
\begin{align}
G_1(y_x,\om)&=(av_1+aHy_x)+av_2y_x, \label{eq: G1def}\\
G_2(y_x,\om)&=\om v_1 + v_1 + Hy_x - av_1^2 - av_1v_2 y_x \label{eq: G2def}
\\& - a\left(\left(\frac{1}{2}\frac{(1+\om)^2}{1+y_x^2} - \om\right) - \frac{v_1^2}{2} - \frac{v_2^2}{2}\right).\nonumber
\end{align}
We have grouped together terms with their linear part.

We note that $N_1(y_x,\om) = F(y_x,\omega)_x$ has no dependence on the parameter $a$. $N_2(y_x,\om)$ depends on $a$ and is different from the function $G(y_x,\om)_x$ appearing in \cite{DR}, p. 217.

For simplicity, we will suppress the dependence on $t$ in the remainder of this section and in the next section. For example, we will write $y(x)$ to mean $y(x,t)$.

Define
\[ p = p(x,x') = \frac{y(x) - y(x')}{x-x'}.\]
With this notation, the equations (\ref{VS:BR}) become
\begin{equation}\label{v1v2}
\left\{\begin{aligned} 
v_1(x) &= -\frac{1}{\pi} \text{ p.v.} \int\frac{p}{1+p^2}\frac{1+\om(x')}{x-x'} \ud x',\\
v_2(x) &= \frac{1}{\pi} \text{ p.v.} \int\frac{1}{1+p^2}\frac{1+\om(x')}{x-x'} \ud x'.
\end{aligned}\right. 
\end{equation}
Recall the following singular integral representations for the operators $H$ and $\Lambda$:
\[ \La u(x) = \frac{1}{\pi} \text{p.v.} \int \frac{u(x)-u(x')}{(x-x')^2} \ud x' \qquad \text{and} \qquad Hu(x) = \frac{1}{\pi}\text{p.v.} \int \frac{u(x')}{x-x'} \ud x'.\]

Duchon and Robert expand $F(y_x,\om)$, $G_1(y_x,\om)$ and $G_2(y_x,\om)$ in terms of the sequence of singular integral operators $\{T_j\}_{j\ge 1}$ defined by
$$T_j(y_x)u(x) :=\frac{1}{\pi} \text{ p.v.} \int \bigg(\frac{y(x) - y(x')}{x-x'}\bigg)^j \frac{u(x')}{x-x'} \ud x'.$$
An important point below is that the linear terms resulting from the expansion of $v_1$ and $v_2$ in sums of $T_j(y_x)$ cancel the non-constant linear terms appearing in \eqref{eq: G1def} and \eqref{eq: G2def}: the non-linear terms in \eqref{LinNh} are essentially quadratic for small $y_x$ and $\omega$ in $B_0$.

For $F(y_x,\om)$, we have, just as in \cite{DR}, p. 217:
\[F(y_x,\om) = \frac{1}{\pi} \text{ p.v.}\int \bigg(\frac{1}{1+p^2} - 1\bigg)\frac{1+\om(x')}{x-x'} \ud x' + \frac{1}{\pi} y_x \text{ p.v.}\int \frac{p}{1+p^2}\frac{1+\om(x')}{x-x'} \ud x',\]
and thus using, formally at first, the expansion
\[\frac{1}{1+p^2}=\sum_{n= 0}^\infty (-1)^np^{2n},\]
we find the expression
$$ F(y_x,\om) = \sum_{j=2}^\infty \eps_jT_j(y_x)(1+\om) + y_x \sum_{j=1}^\infty \eps'_j T_j(y_x)(1+\om),$$
where $\eps_j$, $\eps'_j$ equals $-1$, $0$ or $1$, see \cite{DR}, p. 219. The estimate \eqref{Tjbound1} below implies that the series converges in $B_0$ norm for $\|y_x\|_{B_0}<1$ and $\omega\in B_0$. Note that the terms of lowest order in this expansion are quadratic in $y_x$ and $\omega$.

We now show that $G_1(y_x,\om)$ and $G_2(y_x,\om)$ have a similar decomposition. Using the expression for $v_1(x)$ in (\ref{v1v2}), the term in parentheses in \eqref{eq: G1def} is
\begin{equation} \label{G1}
av_1 + aHy_x = -aT_1(y_x)\om + a\sum_{j=3}^\infty \eps_j T_j(y_x)(1+\om).
\end{equation}
In particular, the lowest order term in the expansion for $av_1$ is $-aHy_x$. Using the expression for $v_2(x)$, the other term in $G_1(y_x,\om)$ satisfies
$$av_2y_x = y_x H\om + y_x\sum_{j=2}^{\infty} \eps_j T_j(y_x)(1+\om).$$
The series in the previous equations will be seen to converge in norm as a consequence of the non-linear estimate \eqref{Tjbound1}, provided $\|y_x\|_{B_0}<1$. As was the case for $F(y_x,\omega)$, every term in the expression for $G_1(y_x,\om)$ either contains a factor of $T_j(y_x)(1+\om)$ for some $j\geq 2$, or else at least two factors of $y_x$ and $\om$. That is, $G_1$ is quadratic in $y_x$ and $\omega$ around $(0,0)$.

Turning to $G_2(y_x,\om)$, we expand the terms $\omega v_1$, $v_1+Hy_x$, $av_1^2$ and $-\frac{1}{2}v_1^2-\frac{1}{2}v_2^2$ using \eqref{v1v2}. In the term
\begin{equation}\label{eq: omexp}
\frac{1}{2}\frac{(1+\om)^2}{1+y^2_x}-\omega=\frac{1}{2}\frac{1+2\om+\om^2}{1+y_x^2}-\omega,\end{equation}
we use the expansion
\[\frac{1}{1+y_x^2}=\sum_{n=0}^\infty(-1)^ny_x^{2n}.\]
This converges in $B_0$ by the algebra property \eqref{eq: algebra}, provided $\|y_x\|_{B_0}<1$. After this expansion, the non-constant linear term $\om$ in \eqref{eq: omexp} disappears. The remaining term in \eqref{eq: G2def} is
 \begin{equation} \label{G2} av_1v_2y_x = ay_x \bigg(\sum_{j=1}^{\infty} \eps_j T_j(y_x)(1+\om)\bigg)\bigg(H\om + \sum_{j=2}^\infty \eps_j'T_j(y_x)(1+w) \bigg).
 \end{equation}
Each term in $G_2(y_x,\om)$ either contains a factor of $T_j(y_x)(1+\om)$ for some $j\geq 2$ or else contains a factor of $T_1(y_x)$ together with at least one other factor of $y_x$, $\om$ or $T_1(y_x)$.

 \section{The non-linear estimate}\label{sec: nonlinear}
The following is the key estimate in Duchon and Robert's work:
\begin{lemma}[\cite{DR}, Section 4] \label{mainlem} Let $y_{1x}$, $y_{2x}$, $\om_1$, $\om_2$ be elements of $B_{\rho}$. Assume $|e^{r\rho}\widehat{y_{ix}}| \leq \mu$, $|e^{r\rho}\widehat{\om_i}| \leq \mu$ ($i=1,2$) and $|e^{r\rho}(\widehat{y_{1x}} -\widehat{ y_{2x}})| \leq \nu$, $|e^{r\rho}(\widehat{\om_1} - \widehat{\om_2})| \leq \nu$ with $\mu$ and $\nu$ positive bounded measures, $\int \ud \mu < 1$. Then,
$$|e^{r\rho}(\widehat{F}(y_{1x}, \om_1) - \widehat{F}(y_{2x},\om_2))| \leq A(\mu)*\nu,$$
where $A$ is a continuous map from the open unit ball of $\mathcal{M}_+$ (the set of bounded positive measures) into $\mathcal{M}_+$, with $A(0) = 0$, and $e^{r \rho}$ denotes the function $\xi \mapsto e^{\rho |\xi|}$ on $\mathbb{R}$. All inequalities above hold in the sense of measures.
\end{lemma}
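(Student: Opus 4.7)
The plan is to reduce the estimate to a multilinear Fourier bound for the singular integral operators $T_j$ that appear in the expansion of $F$ given in Section \ref{sec: structure}. Using the identity
$$\frac{y(x)-y(x')}{x-x'} = \int_0^1 y_x\bigl((1-\tau)x' + \tau x\bigr)\,\mathrm{d}\tau,$$
each factor of $p$ in the definition of $T_j(y_x)u$ becomes an integral of translated copies of $y_x$. Substituting the Fourier representations of $y_x$ and $u$ and computing the remaining principal value $\mathrm{p.v.}\!\int e^{iax'}/(x-x')\,\mathrm{d}x' = -i\pi\,\mathrm{sgn}(a)\,e^{iax}$, the Fourier transform of $T_j(y_x)u$ becomes a $(j+1)$-fold convolution of $\widehat{y_x}$'s with $\widehat{u}$ multiplied by a factor of modulus one and integrated over the unit cube in the $\tau$-variables. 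This gives, in the sense of positive measures,
$$\bigl|\widehat{T_j(y_x)u}(\xi)\bigr| \le \bigl(|\widehat{y_x}|^{*j} * |\widehat{u}|\bigr)(\xi).$$

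The exponential weight $e^{r\rho}$ is compatible with this convolution structure: since the convolution is supported on $\xi = \xi_1 + \cdots + \xi_j + \eta$, the triangle inequality gives $e^{\rho|\xi|} \le e^{\rho|\xi_1|}\cdots e^{\rho|\xi_j|}\,e^{\rho|\eta|}$, and therefore
$$\bigl|e^{r\rho}\widehat{T_j(y_x)u}\bigr| \le \bigl(|e^{r\rho}\widehat{y_x}|\bigr)^{*j} * |e^{r\rho}\widehat{u}|.$$
Applied with $u = 1+\omega$, and using $\widehat{1+\omega} = 2\pi\delta_0 + \widehat{\omega}$ together with the hypothesis $|e^{r\rho}\widehat{\omega_i}| \le \mu$, the $j$-th term of the expansion of $F$ is controlled by $\mu^{*j} * (2\pi\delta_0 + \mu)$, uniformly in the functions satisfying the assumptions of the lemma.

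For the difference of the two values of $F$, I would view $T_j[y_x,\ldots,y_x;1+\omega]$ as a multilinear form in its $j+1$ arguments and telescope slot by slot:
\begin{align*}
T_j[y_{1x},\ldots,y_{1x};1{+}\omega_1] - T_j[y_{2x},\ldots,y_{2x};1{+}\omega_2]
 &= \sum_{k=1}^j T_j[y_{2x},\ldots,y_{1x}{-}y_{2x},\ldots,y_{1x};1{+}\omega_1] \\
 &\quad + T_j[y_{2x},\ldots,y_{2x};\omega_1{-}\omega_2],
\end{align*}
with the analogous telescope applied to the second series $y_x\sum_j \epsilon_j' T_j(y_x)(1+\omega)$. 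Every summand carries exactly one factor of $y_{1x}-y_{2x}$ or $\omega_1-\omega_2$, and by the Fourier bound above is dominated in the measure sense by $\mu^{*(j-1)}*\nu$ convolved with $(2\pi\delta_0 + \mu)$. Summing over $k$, the two series, and using the fact that each term of the expansion of $F$ carries at least one factor of $y_x$, one obtains a bound of the form $A(\mu)*\nu$, where
$$A(\mu) = \sum_{j\ge 1} c_j\, \mu^{*(j-1)} * (\delta_0 + \mu)$$
with integer coefficients $c_j$ growing linearly in $j$. Convergence in total variation holds when $\int \mathrm{d}\mu < 1$ because $\sum_j c_j\,(\int\mathrm{d}\mu)^{j-1}$ is a convergent series, and every summand on the right-hand side carries at least one factor of $\mu$, so $A(0)=0$.

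The main obstacle is the careful justification of the interchange of integrations needed to write $\widehat{T_j(y_x)u}$ as a convolution: the principal value, the $\tau$-integrations, and the inverse Fourier transforms must be exchanged, which requires the absolute convergence of the resulting multiple integrals. This is where the $B_\rho$ hypothesis and the bound $\int\mathrm{d}\mu<1$ enter crucially, guaranteeing that the formal computations are meaningful and that the limiting operation summing over $j$ converges in $\mathcal{M}_+$. Continuity of $A$ on the open unit ball of $\mathcal{M}_+$ then follows from uniform tail estimates for the series as $\int\mathrm{d}\mu$ stays bounded away from $1$.
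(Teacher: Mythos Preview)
Your approach is correct and follows the same overall strategy as the paper (expand $F$ in the $T_j$, obtain a weighted multilinear convolution bound, telescope the difference), but the route to the $T_j$ estimate differs. The paper, following \cite{DR}, passes through the auxiliary operators $R_k$ of Proposition~\ref{eq: drprop}, whose kernel contains the derivative $(p_1\cdots p_k)_x$, and then deduces \eqref{Tjbound1}--\eqref{Tjbound2} with constants $1+2j$ and $c(j)=2j^2+3j+1$. You instead bound $T_j$ directly: the mean-value identity $p=\int_0^1 y_x((1-\tau)x'+\tau x)\,\mathrm{d}\tau$ together with the Hilbert transform of exponentials produces a multilinear symbol of modulus at most $1$ after integrating over $[0,1]^j$, so your convolution bound carries constant $1$ and the telescoped difference has coefficients $O(j)$ rather than $O(j^2)$. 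Both routes converge for $\int\mathrm{d}\mu<1$ and yield $A(0)=0$; yours is slightly sharper and avoids the detour through $R_k$, while the paper's has the advantage that the key analytic step (Proposition~\ref{eq: drprop}) is quoted verbatim from \cite{DR}.

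One small correction: your displayed formula $A(\mu)=\sum_{j\ge 1}c_j\,\mu^{*(j-1)}*(\delta_0+\mu)$ has a $j=1$ term $c_1(\delta_0+\mu)$ that does not vanish at $\mu=0$. This is an indexing slip: the first series in $F$ starts at $j=2$ and the second carries an extra factor of $y_x$, so after telescoping every summand indeed contains at least one $\mu$, as you correctly state in words; the sum should begin at $j\ge 2$ (equivalently, carry $\mu^{*j}$ rather than $\mu^{*(j-1)}$).
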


We show in this section that an analogous inequality for $G_1(y_x,\om)$ and $G_2(y_x,\om)$ is true. To prove the above lemma, Duchon and Robert expressed $F(y_{1x},\om_1) - F(y_{2x},\om_2)$ in terms of the operators $R_k(y_{1x},\ldots, y_{kx})$, defined by
$$R_k(y_{1x},\ldots,y_{kx}) \Om(x) := \frac{1}{\pi} \text{ p.v.} \int \frac{1}{k}(p_1\cdots p_k)_x \Om(x')\ud x',$$
where $p_i(x,x') = (y_i(x)-y_i(x'))/(x-x')$. Abusing notation, we will denote
$$R_k(y_x) := R_k(y_x,\ldots,y_x).$$
We need the following result from \cite{DR}, p. 220. In the proposition below and in the rest of the paper the symbol $\mathcal{F}$ denotes the Fourier transform: $\left(\mathcal{F}(f)\right)(\xi)=\widehat{f}(\xi)$.
\begin{prop}\label{eq: drprop} Let $y_{1x},\ldots,y_{kx}$ be elements of $B_\rho$. Then $R_k(y_{1x},\ldots, y_{kx})$ maps $B_0$ into $B_\rho$, and
\[|e^{r \rho}\mathcal{F}(R_k(y_{1x},\ldots,y_{kx})\Omega)| \leq 2|e^{\rho r}\widehat{y_{1x}}| * \cdots * |e^{\rho r}\widehat{y_{kx}}| * |e^{-\rho r}\widehat{\Omega}|.\]
\end{prop}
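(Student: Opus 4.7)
The plan is a Fourier-side computation. I would begin with the Lipschitz identity
\[p_i(x,x')=\frac{y_i(x)-y_i(x')}{x-x'}=\int_0^1 y_{ix}\bigl(s_i x+(1-s_i)x'\bigr)\,ds_i,\]
combined with Fourier inversion of $y_{ix}$, to write each $p_i$ as an integral of pure exponentials $e^{i\xi_i(s_i x+(1-s_i)x')}$ against $\widehat{y_{ix}}(\xi_i)\,d\xi_i/(2\pi)$. Multiplying $k$ such representations, differentiating in $x$ (which brings down the factor $i\sum_j\xi_j s_j$), pairing against $\Om(x')$, and Fourier-transforming in $x$ yields
\[\widehat{R_k\Om}(\xi)=\frac{2i\xi}{k}\int_{[0,1]^k}\int\delta\Bigl(\xi-\sum_j\xi_j s_j\Bigr)\,\widehat{\Om}\Bigl(\xi-\sum_j\xi_j\Bigr)\prod_j\widehat{y_{jx}}(\xi_j)\,\prod_j\frac{d\xi_j}{2\pi}\,ds.\]

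The crucial pointwise inequality comes next. On the support of the delta one has $\xi=\sum_j\xi_j s_j$ with $s_j\in[0,1]$, so the triangle inequality yields both
\[|\xi|\le\sum_j|\xi_j|s_j\qquad\text{and}\qquad \Bigl|\xi-\sum_j\xi_j\Bigr|=\Bigl|\sum_j\xi_j(1-s_j)\Bigr|\le\sum_j|\xi_j|(1-s_j).\]
Adding these,
\[|\xi|+\Bigl|\xi-\sum_j\xi_j\Bigr|\le\sum_j|\xi_j|,\]
equivalently $e^{\rho|\xi|}\le e^{\rho\sum_j|\xi_j|}\,e^{-\rho|\xi-\sum_j\xi_j|}$. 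Taking absolute values in the Fourier formula and inserting this inequality converts the weight $e^{\rho|\xi|}$ on the left into the product of weights $e^{\rho|\xi_j|}$ on each $\widehat{y_{jx}}$ together with the weight $e^{-\rho|\xi-\sum_j\xi_j|}$ on $\widehat{\Om}$, matching exactly the structure of the right-hand side of the statement.

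What is left is to control the scalar factor $\frac{|\xi|}{k}\int_{[0,1]^k}\delta(\xi-\sum_j\xi_j s_j)\,ds$ by $1$. I would obtain this by integrating out $s_{j^*}$ with $j^*$ chosen so that $|\xi_{j^*}|=\max_j|\xi_j|$: the delta yields $1/|\xi_{j^*}|$ times an indicator contained in $[0,1]^{k-1}$, and on the support of the delta $|\xi|\le\sum_j|\xi_j|\le k|\xi_{j^*}|$. Combining this with the exponential inequality and identifying the result as the density of the claimed convolution of measures gives the pointwise density bound; the passage from densities to general bounded measures $\widehat{y_{ix}}$ and $\widehat{\Om}$ is standard, via mollification in frequency followed by a limiting argument.

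The main obstacle is the pointwise inequality $|\xi|+|\xi-\sum_j\xi_j|\le\sum_j|\xi_j|$ of the second paragraph, which encodes the essential smoothing structure of $R_k$: the representation of $p_i$ as a convex average of $y_{ix}$-values over the segment from $x'$ to $x$ forces the Fourier dual variables $\xi_j s_j$ into a convex polytope, and it is this convexity --- not a cancellation between the singular $1/(x-x')$ kernels appearing in the individual $(p_i)_x$'s --- that delivers the $e^{-\rho|\cdot|}$ weight on $\widehat{\Om}$ and hence the $B_0\to B_\rho$ mapping property. The remaining bookkeeping (tracking the $(2\pi)^k$ factors from the Fourier convention, verifying that $(p_1\cdots p_k)_x$ is in fact continuous at $x=x'$ so the integrals converge absolutely, and justifying the mollification) is routine.
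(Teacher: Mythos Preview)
The paper does not supply its own proof of this proposition; it is quoted from \cite{DR}, p.~220. Your argument is correct and is essentially the one given there: the representation $p_i=\int_0^1 y_{ix}(s_ix+(1-s_i)x')\,ds_i$ combined with Fourier inversion yields the formula for $\widehat{R_k\Omega}$, the pointwise inequality $|\xi|+|\xi-\sum_j\xi_j|\le\sum_j|\xi_j|$ on the support of the delta transfers the weight $e^{\rho|\xi|}$ to the factors $e^{\rho|\xi_j|}$ and $e^{-\rho|\xi-\sum_j\xi_j|}$, and integrating out the $s$-coordinate with maximal $|\xi_j|$ bounds the residual scalar factor $\frac{|\xi|}{k}\int_{[0,1]^k}\delta(\xi-\sum_j\xi_js_j)\,ds$ by $1$.
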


From the previous proposition, we deduce the appraisals (see \cite{DR}, pp. 222-223)
\begin{equation} \label{Tjbound1} |e^{r\rho}\mathcal{F}(T_j(y_x)\Om)| \leq (1+2j)|e^{r\rho}\widehat{y_x}|^{*j}*|e^{r\rho}\widehat{\Om}|,
\end{equation}
and
\begin{equation} \label{Tjbound2} |e^{r\rho}\mathcal{F}(T_j(y_{1x})\Om_1 - T_j(y_{2x})\Om_2)| \leq c(j)(\mu^{*(j-1)} + \mu^{*j})*\nu,
\end{equation}
where $c(j) := 2j^2 +3j+1$ and $\mu^{*j} := \mu*\cdots*\mu$ ($j$ times).

For the terms
\begin{align*}
av_2y_x&= ay_x\big(H\omega+\sum_{j=2}^\infty \eps_jT_j(y_x)(1+\om)\big), \\
a(v_1+Hy_x)&=-aT_1(y_x)\om + a\sum_{j=3}^\infty \eps_j T_j(y_x)(1+\om),\\
\end{align*}
in $G_1(y_x,\om)$ and 
\begin{align*}
\omega v_1&= -\omega\sum_{j=1}^\infty \eps'_j T_j(y_x)(1+\om),   \\ 
v_1+Hy_x, &= -T_1(y_x)\om + \sum_{j=3}^\infty \eps_j T_j(y_x)(1+\om),\\  
-a\big(\frac{1}{2}\frac{(1+\omega)^2}{1+y_x^2}-\omega\big)&=-\frac{a}{2}-a\frac{\omega^2}{2}\\
&\quad -\frac{a}{2}(1+2\omega+\omega)^2\sum_{n= 1}^\infty(-1)^ny_x^{2n},
\end{align*}
in $G_2(y_x,\om)$, whose expansions do not involve products of two sums over the operators $T_j(y_x)$, we proceed in an identical fashion to Duchon and Robert in their proof of the estimate for $F(y_x,\omega)$ on p. 222 in \cite{DR} to establish Lemma \ref{mainlem}.

We are left with terms which do involve the product of two sums over $T_j(y_x)$:
\begin{align*}
-\frac{1}{2}v_1^2-\frac{1}{2}v_2^2&= -\frac{1}{2}\omega\big(\sum_{j=1}^\infty \eps'_j T_j(y_x)(1+\om)\big)^2\\
&\quad -\frac{1}{2}\big(\sum_{j=0}^\infty \eps_jT_j(y_x)(1+\om)\big)^2,\\
av_1^2&=a\omega\big(\sum_{j=1}^\infty \eps'_j T_j(y_x)(1+\om)\big)^2,
\end{align*}
and the term $av_1v_2y_x$ appearing in equation \eqref{G2}. We deal with the latter term. All others terms are estimated similarly. Firstly, consider
$$ \Psi_1(y_x,\om) := ay_x \cdot  H\om \cdot \bigg(\sum_{j=1}^\infty \eps_j T_j(y_x)(1+\om)\bigg). $$
Since $H$ is an isometry in $B_\rho$, by equations (\ref{Tjbound1}) and (\ref{Tjbound2}), we have
$$ |e^{r\rho} \mathcal{F}\big(\Psi_1(y_{1x},\om_1) - \Psi_1(y_{2x},\om_2)\big)| \leq A_1(\mu)*\nu,$$
for a continuous function $A_1$ satisfying $A_1(0) = 0$.

We also need to consider
$$ \Psi_2(y_x,\om) := ay_x\cdot\bigg(\sum^\infty_{j=1}\eps_jT_j(y_x)(1+\om)\bigg) \cdot \bigg(\sum_{j=2}^\infty \eps'_jT_j(y_x)(1+\om)\bigg).$$
Using equations (\ref{Tjbound1}) and (\ref{Tjbound2}) again, the same inequality holds for $\Psi_2$, with  some continuous function $A_2$ satisfying $A_2(0) = 0$ on the right side.

To summarize, for $|e^{r\rho}\widehat{y_{ix}}|$, $|e^{r\rho}\widehat{\om_i}| \leq \mu$ and $|e^{r\rho}(\widehat{y_{1x}}-\widehat{y_{2x}})|$, $|e^{r\rho}(\widehat{\om_1} - \widehat{\om_2})| \leq \nu$, we have
\begin{align}
\label{Fineq} |e^{r\rho}(\widehat{F}(y_{1x},\om_1) - \widehat{F}(y_{2x},\om_2))| &\leq A(\mu) * \nu \\
\label{Gineq} |e^{r\rho}\mathcal{F}\big(G_1(y_{1x},\om_1)+G_2(y_{1x},\om_1) - G_1(y_{2x},\om_2) - G_2(y_{2x},\om_2)\big)| &\leq A(\mu)*\nu,
\end{align}
for a continuous function $A$ satisfying $A(0) = 0$.

The estimates (\ref{Fineq}) and (\ref{Gineq}) allow for the construction of solutions to (\ref{eq: mainsystem}) with prescribed initial data $y_{0x}\in B_0$ and $\omega(0)$ given by (\ref{eqn:wandy}) by a contraction argument in $\mathcal{B}_\alpha \times \mathcal{B}_\alpha$ around the linear solution
\begin{align*}
y_{x,\mathrm{lin}} &= S_-(t)y_{x_0}\\
\omega_{\mathrm{lin}} &= -aHS_-(t)y_{x_0}-S_-(t)(\Del y_{x0}).
\end{align*}
This concludes the proof of the first part of Theorem \ref{mainthm}.
\section{The case $a<0$}\label{sec: agpos}
In this section, we prove the second part of Theorem \ref{mainthm}. When $a<0$, we cannot reproduce the estimates in Lemma \ref{lem: linearestimates} because the linear semigroup does not dampen small frequencies in time. Nevertheless, the quadratic nature of the nonlinearity about $(y_x,\omega)=(0,0)$ allows us to obtain a local in time result.

Another difference with the case $a>0$ is that the multiplier $m(\xi)$ is zero when $|\xi| = ag/(1-a^2)$. At this frequency, the linear homogeneous system with $N_1=N_2=0$ has solution matrix
\[A(t)=e^{ia\xi t}\left(\begin{array}{cc}
1-ia\xi & |\xi|t\\
a^2|\xi| &  1+ia\xi 
\end{array}\right).\]
$A(t)$ has a double eigenvalue $e^{ia\xi t}$ and its Jordan canonical form consists of a $2\times 2$ block. This prevents us from defining the inverse of $\Del$, and diagonalizing the system as we did previously. 

Since there are only two problematic frequencies, if one is content with solutions existing locally in time, it is sufficient to note that when 
\begin{equation}\label{eq: bdfreq} 
|\xi| \le 2ag/(1-a^2)
\end{equation}
we have
\begin{equation}\label{eq: trivial}
\|e^{t\widehat{A}(\xi)}\|\le e^{Ct}.
\end{equation}
Solving \eqref{eq: VnonH}, we have
\begin{equation}\label{eq: direct}
\widehat{V}(\xi,t) = e^{t\widehat{A}(\xi)}\widehat{V}(0) + \int_0^t e^{(t-s)\widehat{A}(\xi)}\left(\begin{array}{c} \widehat{N}_1(s)\\ \widehat{N}_2(s)\end{array}\right)\,\mathrm{d}s.
\end{equation}
We separate the nonlinear term as
\begin{equation}
\left(\begin{array}{c}\widehat{N}_1\\ \widehat{N}_2 \end{array}\right) = \left(\begin{array}{c} i\xi \widehat{F}  \\ i\xi \widehat{G_2} \end{array}\right) +\left(\begin{array}{c}0 \\ (\widehat{G_1})_s \end{array}\right).
\end{equation}
Recalling the assumption \eqref{eq: bdfreq}, the contribution from the first component is bounded in $\mathcal{B}_\alpha$ norm by
\[\int_0^t Ce^{Ct}(|F(\xi,s)|+|G_2(\xi,s)|)\,\mathrm{d}s\le Ce^{C'(\alpha)t}\big(|F|_\alpha+|G_2|_\alpha\big).\]
For the second component, we first integrate by parts in $s$ to find
\[\left(\begin{array}{c} 0\\ \widehat{G}_1(t)\end{array}\right) - e^{t\widehat{A}(\xi)}\left(\begin{array}{c}0\\ \widehat{G}_1(0)\end{array}\right) +\int_0^t \widehat{A}(\xi)e^{(t-s)\widehat{A}(\xi)}\left(\begin{array}{c}0\\ \widehat{G}_1(s)\end{array}\right)\,\mathrm{d}s.\]
This shows that the $\mathcal{B}_\alpha$ norm of the second component can be bounded by 
\[Ce^{C''(\alpha)t}|G_1|_\alpha.\]

Now fix $T>0$, and let $\chi\in C^\infty(\mathbb{R})$ be such that $0\le \chi \le 1$, $\chi(s)=1$, for $|s|\le 1$, and  $\chi(s)=0$ for $|s|\ge 2$. Define
\begin{align}
\tilde{I}^+ h(x,t)&=\int_t^\infty \chi(s) S_+(t-s)h(x,s)\,\mathrm{d}s\\
\tilde{I}^-h(x,t)&= \int_0^t \chi(s) S_-(t-s)h(x,s)\,\mathrm{d}s\\
\tilde{I}_0h&= I^+h(x,0).
\end{align}
When $t\ge T$ and $|\xi|> 2ag/(1-a^2)$, the Fourier transforms of \eqref{eq: usols}, \eqref{eq: usols2}, with $\tilde{I}^\pm$ replacing $I^\pm$ still provide a solution to \eqref{eq: UnonH}. Recall the notation $\mathcal{F}h = \widehat{h}$.
Defining $\tilde{U}=(\tilde{u}_+,\tilde{u}_-)$ by
\begin{align}
\label{eq: usols3}
\mathcal{F}\tilde{u}_+(t) &= \mathcal{F}S_+(t)\widehat{u}_+(0) - \mathcal{F}\tilde{I}^+(N_2+(\Del+aH)N_1)(t)\\
&\quad + \mathcal{F}S_+(t)\tilde{I}_0(N_2+(\Del+aH) N_1), \nonumber \\
\mathcal{F}\tilde{u}_-(t) &= \mathcal{F}S_-(t)\widehat{u}_-(0) + \mathcal{F}\tilde{I}^-(N_2-(\Del-aH) N_1)(t)\label{eq: usols4},
\end{align}
 we obtain the representation, valid for $t\le T$
\begin{equation}\label{eq: solT}
\widehat{V}(\xi,t)=\left(\begin{array}{c} \widehat{y}_x \\ \widehat{\omega} \end{array}\right)= \widehat{B}^{-1}\mathcal{F}\tilde{U}(\xi,t).
\end{equation}
The restriction on the frequency ensures that the multiplier $m(\xi)^{-1}$ appearing in $\widehat{B}^{-1}$ is bounded above. One can now reproduce the estimates for the case $a>0$ exactly. Combining this with the treatment of the frequencies $|\xi|\le 2ag/(1-a^2)$, we have established the following:
\begin{prop} There exist $C,\alpha>0$ such that if $\|y_{0x}\|_{B_0}<\epsilon$ and $T=T(\|y_{0x}\|_{B_0})$ is sufficiently small, there is a pair $(y_x(t),\omega(t))\in \mathcal{B}_\alpha$ such that $\widehat{V}(t)=(\widehat{y}_x(t),\widehat{\omega}(t))$ solves \eqref{eq: direct} when $|\xi|\le 2ag/(1-a^2)$, $t\le T$ and solves \eqref{eq: solT} for $|\xi|\ge 2ag/(1-a^2)$ for $t\le T$ and $y_x(0)=y_{x0}$. 
\end{prop}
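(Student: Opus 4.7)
The plan is a contraction mapping argument in a small ball of $\mathcal{B}_\alpha \times \mathcal{B}_\alpha$ restricted to the time interval $[0,T]$, decomposing the Fourier variable $\xi$ into the low-frequency region $|\xi|\le 2ag/(1-a^2)$ and the high-frequency region $|\xi|>2ag/(1-a^2)$, and assembling the solution piecewise as suggested by the excerpt.

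Define a map $\Phi$ on $(y_x,\om)$ as follows. From the trial pair, compute $N_1,N_2$ via \eqref{yeqn}--\eqref{weqn}. On the low-frequency region, let $\widehat{\Phi(y_x,\om)}(\xi,t)$ be given by the direct Duhamel formula \eqref{eq: direct}, with initial data $\widehat{y}_{x0}(\xi)$ and a prescribed $\widehat{\om}(\xi,0)$ (its precise choice is immaterial for the contraction, since only finitely many frequencies are involved). On the high-frequency region, let $\widehat{\Phi(y_x,\om)}(\xi,t)=\widehat{B}^{-1}\mathcal{F}\tilde{U}(\xi,t)$ with $\tilde U$ given by \eqref{eq: usols3}--\eqref{eq: usols4} and $\om(0)$ prescribed by the analog of \eqref{eqn:wandy} with $I_0$ replaced by $\tilde I_0$. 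A fixed point of $\Phi$ produces a pair $(y_x,\om)$ solving \eqref{eq: direct} on low frequencies and \eqref{eq: solT} on high frequencies, with $y_x(0)=y_{x0}$.

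The two regions are estimated separately. On low frequencies, the key input is $\|e^{t\widehat{A}(\xi)}\|\le e^{Ct}$ from \eqref{eq: trivial}, combined with the splitting of the nonlinearity into $i\xi\widehat{F}+i\xi\widehat{G_2}$ and $(\widehat{G_1})_s$ as performed in the excerpt; the latter is handled by integration by parts in $s$, exactly as displayed. This produces a bound of the form $Ce^{C'T}\bigl(A(\mu)*\nu + \text{initial data}\bigr)$ on the low-frequency piece, where $A$ is the continuous function with $A(0)=0$ from Lemma \ref{mainlem}. On high frequencies, $|m(\xi)|$ is bounded below by a positive constant, so the symbol of $\widehat{B}^{-1}$ is uniformly bounded; the estimates of Lemma \ref{lem: linearestimates} then carry over to $\tilde I^\pm$ essentially verbatim (for $T$ sufficiently small the time cutoff $\chi$ acts as the identity on $[0,T]$), and combined with \eqref{Fineq}, \eqref{Gineq} this yields a high-frequency bound of the form $C(\alpha)\bigl(A(\mu)*\nu+\text{initial data}\bigr)$.

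To close the argument, choose the radius $\eta$ of the ball small enough that $A(\mu)$ is strictly subordinate to $\mu$ (using $A(0)=0$ and continuity of $A$), choose $\|y_{x0}\|_{B_0}<\eps$, and then choose $T=T(\|y_{x0}\|_{B_0})$ small enough that the combined bound both maps the ball of radius $\eta$ into itself and contracts it by a factor strictly less than $1$; the Lipschitz estimate uses \eqref{Fineq} and \eqref{Gineq} applied to the differences $y_{1x}-y_{2x}$ and $\om_1-\om_2$. Banach's fixed point theorem then furnishes the required solution. The main obstacle, and the origin of the local-in-time restriction, is the growth factor $e^{C'T}$ from the low-frequency Duhamel bound: the semigroup $e^{t\widehat{A}(\xi)}$ offers no decay on the bounded frequency region where $m$ vanishes, so no analog of the global $a>0$ contraction is available there, and $T$ must be kept small relative to the size of the initial data.
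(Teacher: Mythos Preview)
Your proposal is correct and follows the paper's own argument essentially verbatim: split into the bounded low-frequency region (direct Duhamel with the crude bound \eqref{eq: trivial} and integration by parts for the $(\widehat{G_1})_s$ piece) and the high-frequency region (diagonalize, use the truncated operators $\tilde I^\pm$, and reproduce Lemma \ref{lem: linearestimates} since $m(\xi)^{-1}$ is bounded there), then close with the nonlinear estimates \eqref{Fineq}, \eqref{Gineq} and a contraction in $\mathcal{B}_\alpha\times\mathcal{B}_\alpha$. One small wording slip: the low-frequency region $|\xi|\le 2ag/(1-a^2)$ is a bounded interval, not ``finitely many frequencies''; the correct reason the $\om(0)$ choice there is harmless is that on this bounded set the multiplier $|\xi|$ and the matrix $\widehat A(\xi)$ are bounded, so the initial-data contribution is controlled by $e^{CT}\|y_{x0}\|_{B_0}$ independently of the contraction constant.
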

The second part of Theorem \ref{mainthm} follows at once. We remark that the treatment of low frequencies we have given here results in a time of existence $T$ of order $\log \frac{1}{\epsilon}$ if the initial data has size $\epsilon$. It is possible to improve on this somewhat and obtain an estimate of the type $T\sim \frac{1}{\sqrt{\epsilon}}$ by treating the frequencies $|\xi|= \frac{ag}{1-a^2}$ separately instead of using the crude estimate \eqref{eq: trivial}. We refrain from doing this because it results in a significantly longer proof, and the added benefit is not clear.

\section{Small global solutions for the Muskat problem}\label{sec: muskat}
In this section, we show that the non-linear estimate of Duchon and Robert in Proposition \ref{eq: drprop} can also be used to construct global solutions to the Muskat equation \eqref{eq: muskat} in the neighborhood of a flat interface.

To construct global solutions with $(f_0)_x\in B_0$, we proceed analogously to the Rayleigh-Taylor case, although the computation is simpler. We assume we are in the ``stable'' configuration with the fluid of higher mass density lying below the fluid of lower density:
\[\rho_->\rho_+.\]
We differentiate \eqref{eq: muskat} to obtain an equation for $f_x(x,t)$, which we write in perturbative form around the flat interface $f\equiv 0$:
\begin{align}
\partial_t f_x(x,t) + \frac{(\rho_--\rho_+)}{2}\Lambda f_x(x,t) = \partial_x N(f)(x,t) \label{eq: muskat2}\\
f_x(x,0) =(f_0)_x(x),
\end{align}
where the nonlinearity is given by
\[N(f)(x,t) =  -\frac{\rho_--\rho_+}{2\pi}\int \frac{\partial_xf(x,t)-\partial_x f(x',t)}{x-x'}\cdot \frac{\left(\frac{f(x,t)-f(x',t)}{x-x'}\right)^2}{1+\left(\frac{f(x,t)-f(x',t)}{x-x'}\right)^2}\,\ud x.\] 
We expand $N(f)$ in terms of operators similar to the $T_j$ defined in Section \ref{sec: nonlinear}:
\[N(f)(x,t) = \sum_{j\ge 2}\epsilon_j \left(\tilde{T}_j(f_x)f_x\right)(x,t),\]
where
\[\left(\tilde{T}_j(f_x)u\right)(x,t)= \frac{1}{\pi} \int \left(\frac{f(x,t)-f(x',t)}{x-x'}\right)^j\frac{u(x,t)-u(x',t)}{x-x'}\ud x'\]
and $\epsilon_j \in \{-1,0,1\}$. The estimate \eqref{Tjbound1} (with $\tilde{T}_j$ replacing $T_j$ and $f_x$ replacing $\Omega$) shows that the expansion is convergent in $B_0$ if $\|f_x\|_{B_0}<1$.

Returning to equation \eqref{eq: muskat2}, we can express the solution in Duhamel form
\[f(x,t) = e^{-t\big(\frac{\rho_--\rho_+}{2}\big)\Lambda}f_0(x) +\int_0^te^{-(t-s)\big(\frac{\rho_--\rho_+}{2}\big)\Lambda}\left(N(f)(x,s)\right)_x\ud s.\]
Since $\rho_--\rho_+>0$, we can now reproduce the analogue of the linear estimate \eqref{eq: xlinest} in Lemma \ref{lem: linearestimates}, and combine it with Duchon and Robert's nonlinear estimate \eqref{Tjbound1}. Applying a contraction argument in $\mathcal{B}_\alpha$ for $\alpha < (\rho_--\rho_+)/2$, we obtain the following
\begin{thm}
There are constants $\epsilon>0$ and $\alpha>0$ such that, for any initial data with $\|(f_0)_x\|_{B_0}\le \epsilon$, there is a solution to the Muskat problem \eqref{eq: muskat2} in $\mathcal{B}_\alpha$. The solution is unique in a ball in $\mathcal{B}_\alpha$.
\end{thm}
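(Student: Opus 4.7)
The plan is to construct the solution by a Banach fixed point argument applied to the Duhamel map
\[
\Phi(f_x)(t) = e^{-t\frac{\rho_--\rho_+}{2}\Lambda}(f_0)_x + \int_0^t e^{-(t-s)\frac{\rho_--\rho_+}{2}\Lambda}\bigl(N(f)(\cdot,s)\bigr)_x \,\ud s,
\]
viewed as a self-map on a small closed ball in $\mathcal{B}_\alpha$, with $\alpha$ chosen strictly less than $(\rho_--\rho_+)/2$. The strict inequality $\alpha < (\rho_--\rho_+)/2$ is what guarantees that the linear semigroup $e^{-t\frac{\rho_--\rho_+}{2}\Lambda}$ beats the analyticity weight $e^{\alpha t|\xi|}$ pointwise in $\xi$, which is the Muskat analogue of the restriction $\alpha < \sqrt{1-a^2}/2$ appearing in Lemma \ref{lem: linearestimates}.

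First I would record the linear estimate: for $h\in \mathcal{B}_\alpha$, an estimate of the form $|\Phi_0 h|_\alpha \le C(\alpha)|h|_\alpha$, where $\Phi_0 h(t) := \int_0^t e^{-(t-s)\frac{\rho_--\rho_+}{2}\Lambda} h_x(s)\,\ud s$, is obtained by the same Fourier computation as in the proof of \eqref{eq: xlinest}: the factor $|\xi|$ from $h_x$ is integrated against $e^{(t-s)|\xi|(\alpha - (\rho_--\rho_+)/2)}$, which is integrable in $s$ uniformly in $\xi$ and $t$ precisely when $\alpha < (\rho_--\rho_+)/2$. The same estimate applied to the initial data shows $|e^{-t\frac{\rho_--\rho_+}{2}\Lambda}(f_0)_x|_\alpha \le \|(f_0)_x\|_{B_0}$.

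Next I would estimate the nonlinearity. The expansion $N(f) = \sum_{j\ge 2}\epsilon_j \tilde T_j(f_x) f_x$ and the appraisal \eqref{Tjbound1} (applied with $\tilde T_j$ and $f_x$ in place of $T_j$ and $\Omega$) give, for any $t$-dependent dominating measure $\mu(t)$ of $e^{\alpha t|\xi|}\widehat{f_x}(\xi,t)$,
\[
|e^{\alpha t|\xi|}\widehat{N(f)}(\xi,t)| \le \sum_{j\ge 2} (1+2j)\,\mu(t)^{*(j+1)},
\]
whose total mass is controlled by $\sum_{j\ge 2}(1+2j)\|\mu(t)\|^{j+1}$, a convergent geometric-type series as soon as $|f_x|_\alpha < 1$, and quadratic in $|f_x|_\alpha$ for small arguments. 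Combined with the linear estimate, this gives $|\Phi(f_x)|_\alpha \le \|(f_0)_x\|_{B_0} + C(\alpha) Q(|f_x|_\alpha)$ with $Q(r) = O(r^2)$ as $r\to 0$. A standard choice of ball radius $R\sim \epsilon$ then makes $\Phi$ map the ball into itself once $\epsilon$ is small enough.

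The remaining step, which is the one requiring the most care, is the contraction property: I would reproduce the telescoping argument from \cite{DR} that yields \eqref{Tjbound2} but with $\tilde T_j$. Writing
\[
\tilde T_j(f_x) f_x - \tilde T_j(g_x) g_x = \bigl(\tilde T_j(f_x) - \tilde T_j(g_x)\bigr) f_x + \tilde T_j(g_x)(f_x - g_x)
\]
and expanding $\tilde T_j(f_x) - \tilde T_j(g_x)$ as a telescoping sum involving factors $(f-g)(x)-(f-g)(x')$ over $(x-x')$ together with $j-1$ factors of $(f-g_i)(x)-f-g_i)(x')$ over $(x-x')$, Proposition \ref{eq: drprop} applied in this form produces a bound of the shape $c(j)(\mu^{*(j-1)} + \mu^{*j})\ast\nu$ on the difference, where $\mu$ dominates both $e^{\alpha t|\xi|}\widehat{f_x},\,e^{\alpha t|\xi|}\widehat{g_x}$ and $\nu$ dominates $e^{\alpha t|\xi|}(\widehat{f_x}-\widehat{g_x})$. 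Summing in $j$ and combining with the linear estimate yields
\[
|\Phi(f_x)-\Phi(g_x)|_\alpha \le C(\alpha)\, \tilde Q(R)\, |f_x - g_x|_\alpha,
\]
with $\tilde Q(R)\to 0$ as $R\to 0$, so $\Phi$ is a strict contraction on the ball of radius $R$ for $\epsilon$ sufficiently small. The Banach fixed point theorem then delivers a unique fixed point in this ball, which is the desired global-in-time analytic solution. The main obstacle throughout is bookkeeping the telescoping differences for $\tilde T_j$ carefully enough to apply Proposition \ref{eq: drprop} uniformly in $j$; everything else is a direct adaptation of the argument of Sections \ref{sec: structure} and \ref{sec: nonlinear}.
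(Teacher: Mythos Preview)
Your proposal is correct and follows exactly the route the paper takes: the Duhamel formulation, the linear estimate for $\alpha<(\rho_--\rho_+)/2$ analogous to \eqref{eq: xlinest}, the nonlinear bounds \eqref{Tjbound1}--\eqref{Tjbound2} adapted to the operators $\tilde T_j$, and a contraction in $\mathcal{B}_\alpha$. Your write-up is in fact more detailed than the paper's brief sketch; aside from a garbled phrase in the telescoping description, nothing is missing.
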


\textbf{Acknowledgement.} We wish to thank the anonymous referee for their patience and valuable criticism.


\begin{thebibliography}{9}
\bibitem[BL]{BL} Bardos, C., Lannes, D., \emph{Mathematics for 2D Interfaces,} to appear in Panorama et Synth\`eses.
\bibitem[CCGS]{CCGS} Constantin, P., Cordoba, D., Gancedo, F., Strain, R. \emph{On the global existence for the Muskat problem.}, J. Eur. Math. Soc., \textbf{15}, 2013.
\bibitem[CP]{CP} Constantin, P., Pugh, M., \emph{Global solutions for small data to the Hele-Shaw problem}, Nonlinearity, \textbf{6}, 1993.
\bibitem[CG]{CG} Cordoba, D., Gancedo, F., \emph{Contour dynamics of incompressible 3D fluids in a porous medium with different densities}, Comm. Math. Phys., \textbf{273}, 2007.
\bibitem[DR]{DR} Duchon, J., Robert, R.,  \emph{Global Vortex Sheet Solutions of Euler Equations in the Plane,} Journal of Differential Equations \textbf{73}, 215-224, 1988.
\bibitem[KL]{KL} Kamotski, V., Lebeau, G., \emph{On 2D Rayleigh-Taylor instabilities}, Asymptot. Anal., \textbf{42}, 1-27, 2005.
\bibitem[SCH]{SCH} Siegel, M., Caflisch, R., Howison, S. \emph{Global existence, singular solutions and ill-posedness for the Muskat problem}, Comm. Pure and Appl. Math., \textbf{57}, 2004.
\bibitem[SS]{SS}  Sulem, C.,  Sulem, P.L., \emph{Finite time analyticity for the two- and three-dimensional Rayleigh-Taylor instability}, Trans. Amer. Math. Soc., \textbf{287}(1), 127-160, 1981.
\bibitem[SSBF]{SSBF} Sulem, C., Sulem P.L., Bardos, C. and Frisch, U., \emph{Finite time analyticity for the two and three-dimensional Kelvin-Helmholtz instability}, Comm. in Math. Phys. \textbf{80}, 485-516, 1981.
\bibitem[W]{W} Wu, S., \emph{Mathematical
analysis of vortex sheets}, Comm. Pure Appl. Math., \textbf{59}(8), 1065-1206, 2006.
\end{thebibliography}
\end{document}